\newtheorem{thm}{Theorem}[section]
\newtheorem{lemma}[thm]{Lemma}
\newtheorem{df}{Definition}[section]
\newtheorem{remark}{Remark}[section]
\newtheorem{prop}[thm]{Proposition}
\numberwithin{equation}{section}
\newcommand{\lag}{\mathfrak g}
\newcommand{\lap}{\mathfrak p}
\newcommand{\lau}{\mathfrak u}
\newcommand{\laso}{\mathfrak{so}}
\newcommand{\lah}{\mathfrak h}
\newcommand{\lagl}{\mathfrak{gl}}
\newcommand{\mW}{\mathbb W}
\newcommand{\mV}{\mathbb V}
\newcommand{\mE}{\mathbb E}
\newcommand{\mF}{\mathbb F}
\newcommand{\gO}{\mathrm{O}}
\newcommand{\gP}{\mathrm P}
\newcommand{\gK}{\mathrm K}
\newcommand{\gH}{\mathrm H}
\newcommand{\gU}{\mathrm{U}}
\newcommand{\rgr}{G_2^+(\mathbb{R}^{n+2})}
\newcommand{\pgr}{G_2^o(\mathbb{R}^{2,n+2})}
\newcommand{\sv}{V_2(\mathbb{R}^{n+2})}
\newcommand{\CSO}{\mathrm{CSO}}
\newcommand{\Ad}{\mathrm{Ad}}
\newcommand{\SO}{\mathrm{SO}}
\newcommand{\SL}{\mathrm{SL}}
\newcommand{\Aut}{\mathrm{Aut}}
\newcommand{\GL}{\mathrm{GL}}
\newcommand{\CSp}{\mathrm{CSp}}
\newcommand{\Spin}{\mathrm{Spin}}
\newcommand{\G}{\mathrm G}
\newcommand{\R}{\mathbb R}
\newcommand{\Sp}{\mathbb S}
\newcommand{\Z}{\mathbb Z}
\newcommand{\cE}{\mathcal E}
\newcommand{\cP}{\mathcal P}
\newcommand{\cS}{\mathcal S}
\newcommand{\cG}{\mathcal G}
\newcommand{\cp}{p^\sharp}
\newcommand{\ccG}{\cG^\sharp}
\newcommand{\cL}{\mathcal L}
\newcommand{\cgK}{\mathrm K^\sharp}
\newcommand{\cgH}{\mathrm H^\sharp}
\newcommand{\tcgK}{\tilde{\mathrm K}^\sharp}
\newcommand{\tcgH}{\tilde{\mathrm H}^\sharp}
\newcommand{\tccG}{\tilde{\mathcal G}^\sharp}
\newcommand{\ra}{\rightarrow}
\newcommand{\cm}{M^\sharp}
\newcommand{\cx}{x^\sharp}
\newcommand{\tcp}{\tilde{p}^\sharp}
\newcommand{\Id}{\operatorname{Id}}
\newcommand{\End}{\operatorname{End}}
\newcommand{\im}{\operatorname{im}}
\title{Elliptic complex on the Grassmannian of oriented 2-planes}
\author{Tom\'a\v s Sala\v c }
\thanks{
2010
Mathematics  Subject  Classification.
Primary: 58J10;
Secondary: 53C35, 53C07.\\
Key words and phrases. elliptic complex of differential operators , Grassmannian manifold, parabolic contact structure, symmetry reduction.\\
The author gratefully acknowledges the support of FWF grant P23244–N13 and 17-01171S of the Grant Agency of the Czech Republic.
\\}
\begin{document}

\begin{abstract} 
The Grassmannian $\rgr$ of oriented 2-planes in $\R^{n+2}$ where $n\ge3$ carries a homogeneous parabolic conformally symplectic structure of Grassmannian type. The main result of this article is that on $\rgr$ lives an elliptic complex of invariant differential operators of length 3  which starts with the 2-Dirac operator and that the index of the complex is zero. 
\end{abstract}
\maketitle

\section{Introduction}
A parabolic conformally symplectic structure (or PCS-structure for short) of Grassmannian type on a manifold $M$ of dimension $2n\ge6$ is a Grassmannian structure with auxiliary (oriented) vector bundles $E$ and $F$ of rank $2$ and $n$, respectively, together with a conformally symplectic structure which is Hermitian in the Grassmannian sense, see Section \ref{section PACS structures of GT}. In particular, there is an isomorphism  $TM\cong E^\ast\otimes F$ and  a conformal class of (positive definite) bundle metrics on $F$.
This geometric structure is equivalent to $\G_0:=\GL^+(2,\R)\times\SO(n)$-structure  which satisfies some integrability condition. It turns out that there is a canonical compatible linear connection on $TM$ which can be equivalently viewed as a principal connection on the $\G_0$-structure. If the $\G_0$-structure lifts to a $\tilde\G_0:=\GL^+(2,\R)\times\Spin(n)$-structure $\tilde\cG_0\ra M$, then the principal connection lifts (in a unique way) to a principal connection on the $\tilde\G_0$-structure and thus, it induces a covariant derivative $\tilde\nabla$ on any associated vector bundle. 

Let  $\Sp$ be a complex spinor representation of $\Spin(n)$. We extend this representation to $\tilde\G_0$, see Section  \ref{section Dirac complex} for details, and put $\cS:=\tilde\cG_0\times_{\tilde\G_0}\Sp$. As $T^\ast M$ is associated to a ${\tilde\G_0}$-representation with underlying vector space $\R^{2}\otimes\R^{n\ast}$, see Section \ref{section PACS structures of GT}, there is  a vector bundle map $\gamma:T^\ast M\otimes\cS\ra E\otimes\cS$ that is induced by a $\tilde\G_0$-equivariant map $\R^{2}\otimes\R^{n\ast}\otimes\Sp\ra\R^{2}\otimes\Sp$. It follows that there is a linear differential operator of first order, called the 2-\textit{Dirac operator}, which is  invariantly  defined by
\begin{equation}\label{2-Dirac operator}
 \slashed{D}:\Gamma(\cS)\xrightarrow{\tilde\nabla}\Gamma(T^\ast M\otimes\cS)\ra\Gamma(E\otimes\cS)
\end{equation}
where the second map is induced by $\gamma$.
If $x\in M$, $\{e_1,e_2\}$ is a basis of $E^\ast_x$, $\{\varepsilon_1,\dots,\varepsilon_n\}$ is an orthonormal basis\footnote{With respect to some metric in the conformal class.} of $F_x$ and $\psi\in\Gamma(\cS)$, then 
\begin{equation}\label{2-Dirac operator locally}
\slashed{D}\psi(x)=\sum_{\alpha=1}^n(\varepsilon_\alpha.\tilde\nabla_{e_1\otimes\varepsilon_\alpha}\psi,\varepsilon_\alpha.\tilde\nabla_{e_2\otimes\varepsilon_\alpha}\psi)(x).
\end{equation}
Here we use the Grassmannian structure so that we can view $e_i\otimes\varepsilon_\alpha\in T_xM$ and $\varepsilon_\alpha.\in\End(\Sp)$ denotes the usual Clifford multiplication on $\Sp$. 

If $M=\R^{2n}$ with its flat PCS-structure of Grassmannian type, then $\slashed{D}$ is the 2-Dirac operator studied in \cite{CSSS}.
The $2$-Dirac operator can be viewed as a generalization of the Dirac operator in Riemannian geometry. On the other hand, $\slashed{D}$ is overdetermined and it turns out that it is the first operator in a sequence of natural differential operators. This sequence is, as explained in the series  \cite{CSI}, \cite{CSII} and \cite{CSIII} together with a preliminary article \cite{CSa}, obtained by  descending the sequence of natural differential operators studied in \cite{S} from a geometric structure in one dimension higher. If $M=\R^{2n}$, then the descended sequence is locally exact and thus, it forms (see \cite{TSI}) a resolution of the 2-Dirac operator. 

In this paper is considered  a similar sequence of invariant operators which lives on the Grassmannian $\rgr$ of oriented 2-planes in $\R^{n+2}$. The Grassmannian  carries a homogeneous  PCS-structure of Grassmannian type with structure group $\G_0$ which \textit{does not} lift to  $\tilde\G_0$-structure. Nevertheless, it lifts to a  $\G_0^c:=\GL^+(2,\R)\times\Spin^c(n)$-structure and since the complex spinor representation of $\Spin(n)$ extends to $\Spin^c(n)$, there is the spinor bundle associated to the $\G_0^c$-structure. Fixing a principal connection  which lifts the canonical connection,  we can define  the 2-Dirac operator over $\rgr$  as in (\ref{2-Dirac operator}).

We will show (see Theorem \ref{thm descended complex}) that the 2-Dirac operator over $\rgr$ is the first operator in an elliptic complex of linear invariant operators and we will prove (see Theorem \ref{thm ellipticity and index}) that  its index is zero. In order to construct the complex, we  will adapt (see also Remark \ref{remark not pcs quotient}) the descending scheme introduced in the series \cite{CSI,CSII,CSIII} and we will show 
that the elliptic complex  descends  from the 2-Dirac complex introduced in \cite{S}. 

\bigskip
The author is grateful to Andreas \v Cap and Micheal C. Crabb for valuable comments and suggestions. The author wishes to thank the unknown referees for their valuable suggestions which considerably improved the current manuscript.

\bigskip

\begin{center}
 \textbf{Notation}
\end{center}
\begin{tabular}{rl}
$1_k$& identity $k\times k$ matrix\\
$M_{n\times k}(\mathbb F)$& matrices of size $n\times k$ with coefficients in field $\mathbb F$\\
$M_{k}(\mathbb F)$&$:= M_{k\times k}(\mathbb F)$\\
$A_{k}(\mathbb F)$& skew-symmetric $k\times k$ matrices with coefficients in field $\mathbb F$\\
$\ker(\psi), \im(\psi)$& kernel and image of map $\psi$, respectively
\end{tabular}

\section{Preliminaries}\label{section preliminaries}
In Section \ref{section preliminaries}  we will recall some well known material on the group $\Spin^c(n)$, spin and spin$^c$ structures.
This material can be found for example in \cite{Mo}. 
 
\subsection{Spin, spin$^c$ groups and spinors}\label{section spin group}
Let $\rho_n:\Spin(n)\ra\SO(n), n\ge2$ be the standard 2:1 covering with $\ker(\rho_n)=\{\pm1\}$. If $n\ge3$, then $\Spin(n)$ is a universal cover of $\SO(n)$ and  $\ker(\rho_n)$ is the center of $\Spin(n)$.  If $n=2$, then $\Spin(2)\cong\SO(2)\cong \gU(1)=\{e^{it}|\ t\in\R\},\ \rho_2(e^{it})=e^{2it}$  and $\laso(2)\cong\lau(1)=\{it:\ t\in\R\}$. We will for brevity write $-a:=(-1).a=a.(-1),\ a\in\Spin(n)$.

The group $\Spin^c(n)$ is the quotient of $\gU(1)\times\Spin(n)$ by the normal subgroup $\Z_2=\{\pm(1,1)\}$ and so there is a short exact sequence 
\begin{equation}
0\ra\Z_2\ra\gU(1)\times\Spin(n)\xrightarrow{\pi}\Spin^c(n)\ra0
\end{equation}
where we for a moment denote by $\pi$ the canonical projection. As $\Z_2$ is a discrete subgroup,  the Lie algebra of $\Spin^c(n)$ is $\lau(1)\oplus\laso(n)$. We for brevity put $\langle e^{it},a\rangle:=\pi(e^{it},a)$ so that $\langle e^{it},a\rangle=\langle e^{is},b\rangle$ if and only if $a=b,\ e^{it}=e^{is}$ or $a=-b,\ e^{it}=-e^{is}$.

The maps $\Spin(n)\ra\Spin^c(n),\ a\mapsto\langle 1,a\rangle$ and $\gU(1)\ra\Spin^c(n),\ e^{it}\mapsto\langle e^{it},1\rangle$ are clearly injective  homomorphisms of Lie group and so we may view  $\Spin(n)$ and $\gU(1)$ as subgroups of $\Spin^c(n)$.  If $n\ge3$, then $\gU(1)$ is the center of $\Spin^c(n)$. We obtain short exact sequences:
\begin{equation}
0\ra\gU(1)\ra\Spin^c(n)\xrightarrow{\rho_n^c}\SO(n)\ra0\label{first ses}
\end{equation}
and
\begin{equation}
0\ra\Spin(n)\ra\Spin^c(n)\xrightarrow{\varsigma_n}\gU(1)\ra0\label{second ses}
\end{equation}
where $\rho_n^c(\langle e^{it},a\rangle)=\rho_n(a)$ and $\varsigma_n(\langle e^{it},a\rangle)=\rho_2(e^{it})$.

\begin{lemma}\label{lemma inclusion of spinc inside spin}
There is a short exact sequence of Lie groups
\begin{equation}\label{third ses}
 0\ra\Z_2\ra\Spin^c(n)\xrightarrow{\varsigma_n\times\rho_n^c}\SO(2)\times\SO(n)\ra0
\end{equation}
and a commutative diagram 
\begin{equation}
\xymatrix{\Spin^c(n)\ar[d]^{\varsigma_n\times\rho_n^c}\ar[r]^{\iota}&\Spin(n+2)\ar[d]^{\rho_{n+2}}\\
\SO(2)\times\SO(n)\ar[r]^{\triangle}&\SO(n+2)}
\end{equation}
where $\iota$ is an embedding and $\triangle$ is the standard block diagonal embedding. 
\end{lemma}
\begin{proof}
The first claim is clear. To prove the second claim, we  need to show that $\rho_{n+2}^{-1}(\triangle(\SO(2)\times\SO(n))\cong\Spin^c(n)$ and this is straightforward.
\end{proof}

Let $(\mW,\gamma)$ be a  complex representation of $\Spin(n)$. If $\gamma(-1)=-\Id_\mW$, then 
\begin{eqnarray}\label{spinc representation on spinors}
\gamma^c:\Spin^c(n)\ra\GL(\mW), \
\gamma^c(\langle e^{it},a\rangle)(v)=e^{it}\gamma(a)v,
\end{eqnarray}
where $a\in\Spin(n),\ t\in\R$ and $v\in\mW$, is a complex representation of $\Spin^c(n)$. In particular, this applies for  $\mW=\Sp$.

\subsection{Spin and spin$^c$-structure}\label{section spinc structure}
Let $q:\cE\ra M$ be a $\gU(1)$-principal bundle (or simply a circle bundle), $\partial_t$ be the fundamental vector field corresponding to $i\in\lau(1)$, $\Omega^p(\cE)^{\gU(1)}$ be the space of  $\gU(1)$-invariant $p$-forms on $\cE$  and $i_\xi$ be the insertion operator associated to a vector field $\xi\in\mathfrak X(\cE)$. Any principal connection on $\cE\ra M$ is of the form $i\alpha$ where $\alpha\in\Omega^1(\cE)^{\gU(1)}$ and $i_{\partial_t}\alpha=1$. The space of  principal connections is an affine space over $\Omega^1(M)$, i.e. any principal connection is equal to $i(\alpha+q^\ast\theta)$ for a unique $\theta\in\Omega^1(M)$. Since $d\alpha\in\Omega^2(\cE)^{\gU(1)}$  and $i_{\partial_t}d\alpha=0$, there is a unique $\mathbf\Omega\in\Omega^2(M)$ such that $d\alpha=q^\ast\mathbf\Omega$. Obviously, $d\mathbf\Omega=0$ and by a remark above, it follows that the cohomology class $[\mathbf\Omega]\in H^2(M,\R)$ depends only on $\cE\ra M$.  The class $\frac{-1}{2\pi}[\mathbf\Omega]$ is integral and it is called the \textit{Chern 
class} of $\cE\ra M$.

\medskip

Let  $\cP\ra M$  be  a $\SO(n)$-principal bundle and $F:=\cP\times_{\SO(n)}\R^n$ be the associated vector bundle. A spin structure on  $F$ is a  $\Spin(n)$-principal bundle $\tilde\cP\ra M$ together with a fiber bundle map $\tilde\cP\ra\cP$ over the identity map $\Id_M$ on $M$ which is in each fiber compatible in an obvious way with  $\rho_n$. We also say that  $\tilde\cP\ra M$ is a \textit{lift} of $\cP\ra M$ to  spin structure. It is well known that $F$ admits a spin structure if and only if the second Stiefel-Whitney class $w_2(F)$ of $F$ is zero.

A spin$^c$ structure on $F$ is a $\Spin^c(n)$-principal bundle $\cP^c\ra M$ together with a bundle map $\cP^c\ra\cP$ over $\Id_M$ which is compatible with $\rho_n^c$. A spin$^c$ structure exists if, and only and if there  is an integral class $X\in H^2(M,\Z)$ such that $\rho(X)=w_2(F)$ where $\rho: H^2(M,\Z)\ra H^2(M,\Z_2)$ is the standard map  induced by  $\Z\ra\Z_2,\ i\mapsto i$ mod $2$. 

\medskip

Using the short exact sequences (\ref{second ses}) and (\ref{third ses}), we define
\begin{equation*}
\cE:=\cP^c\times_{\Spin^c(n)}\gU(1) \ \mathrm{and}\ 
\cP':=\cP^c\times_{\Spin^c(n)}(\gU(1)\times\SO(n)).
\end{equation*}
Then there is a commutative diagram 
\begin{equation}\label{com diagram with Spinc}
\xymatrix{\cP'\ar@{~>}[d]|{\SO(n)}&&\ar@{~>}|{\Z_2}[ll]\cP^c\ar@{=}[r]\ar@{~>}[d]|{\Spin(n)}&\cP^c\ar@{~>}|{\gU(1)}[r]\ar@{~>}[d]|{\Spin^c(n)}&\cP\ar@{~>}[d]|{\SO(n)}\\
\cE\ar@{=}[rr]&&\cE\ar@{~>}|{\gU(1)}[r]&M\ar@{=}[r]&M\\} 
\end{equation}
where $\xymatrix{\cG\ar@{~>}|G[r]&N}$ denotes a $G$-principal bundle $\cG\ra N$. It is easy to see that $\cP'\ra M$ is isomorphic to the fibered  product $\cE\times_M\cP\ra M$ and  that $\cE\times_M\cP\ra\cE$ is isomorphic to the pullback $q^\ast\cP\ra\cE$ where $q:\cE\ra M$.

\medskip

Let us now assume that $g_M$ is a Riemannian metric on $M$ and that $F=TM$. The Levi-Civita connection can be viewed as a principal connection $\omega$ on $\cP$. If $M$ admits a spin structure $\tilde\cP\ra M$ with a lift $t:\tilde\cP\ra\cP$, then  $t^\ast\omega$  is  the canonical spin principal connection induced by the Levi-Civita.  

On the other hand, it may happen that $M$ admits only a spin$^c$-structure  $\cP^c\ra M$. Let $\cE\ra M$ be the associated  circle bundle.  Then as we observed above,  $\cP^c\ra\cE\times_M\cP$ is a 2:1 covering and so a principal connection on $\cP^c\ra M$ is the pullback of a principal connection on $\cE\times_M\cP\ra M$. If $p_1:\cE\times_M\cP\ra\cE$ and $p_2:\cE\times_M\cP\ra\cP$ are the canonical projections and $i\alpha$ is a principal connection on $\cE\ra M$, then $p_1^\ast i\alpha\oplus p_2^\ast\omega$ is a principal connection on $\cE\times_M\cP\ra M$. We see that there is a family of natural principal connections on $\cP^c\ra M$ and each member of this family is determined by a principal connection on $\cE\ra M$. 

\section{Lie contact structure and 2-Dirac complex}\label{section Lie contact structure and Dirac complex}
A Lie contact structure is (see Section \ref{section Lie contact})  a  contact structure which is associated to a (unique) contact grading on $\laso(2,n+2)$. We will need in this article two types of Lie contact structures, an effective one (see Section \ref{section ef LCS}) and a non-effective one (see Section \ref{section non-ef LCS}). In order to properly define the the 2-Dirac complex introduced in \cite{S}, we will need  (see Section \ref{section Dirac complex}) the non-effective structure.  See \cite[Section 4.2.5]{CS} for more about the Lie contact structure.

\subsection{Lie contact structure}\label{section Lie contact}

Let $n\ge3$ be an integer, $\delta$ be the Kronecker delta, $\{\tau_1,\tau_2,\varepsilon_1,\ldots,\varepsilon_{n},\tau^1,\tau^2\}$ be the standard basis of $\R^{n+4}$ and  $h$ be the symmetric bilinear form on $\R^{n+4}$ determined by $h(\tau_i,\tau^j)=\delta_{ij},\ h(\tau_i,\varepsilon_\alpha)=h(\tau^i,\varepsilon_\alpha)=0,\ h(\varepsilon_\alpha,\varepsilon_\beta)=\delta_{\alpha\beta},\ i,j=1,2$ and $\alpha,\beta=1,\dots,n$. The  signature of $h$ is $(2,n+2)$ and we will for brevity denote  $(\R^{n+4},h)$  by $\R^{2,n+2}$.  The associated Lie algebra $\lag:=\laso(h)\cong\laso(2,n+2)$ is 
\begin{equation}\label{orthogonal Lie alg}
\Bigg\{\left(\begin{array}{ccc}
A&Z^T&W\\
X&B&-Z\\
Y&-X^T&-A^T
\end{array}\right)\Bigg|\ \begin{matrix}
A\in M_2(\R),\ Y,W\in A_2(\R),\\
B\in\laso(n),\ X,Z\in M_{n\times2}(\R)
\end{matrix}\Bigg\}.
\end{equation}
Then $\lag$ is a direct sum of the following five subspaces
\begin{eqnarray}\label{gradation on g}
&\lag_{-2}=\Bigg\{
\left(\begin{matrix}
0&0&0\\
0&0&0\\
\ast&0&0\\
\end{matrix}
\right)
\Bigg\},\
\lag_{-1}=\Bigg\{
\left(\begin{matrix}
0&0&0\\
\ast&0&0\\
0&\ast&0\\
\end{matrix}
\right)
\Bigg\},\ 
\lag_{0}=\Bigg\{
\left(\begin{matrix}
\ast&0&0\\
0&\ast&0\\
0&0&\ast\\
\end{matrix}
\right)
\Bigg\},\nonumber&\\ 
&\lag_{1}=\Bigg\{
\left(\begin{matrix}
0&\ast&0\\
0&0&\ast\\
0&0&0\\
\end{matrix}
\right)
\Bigg\}\ \ \mathrm{and}\ \
\lag_{2}=\Bigg\{
\left(\begin{matrix}
0&0&\ast\\
0&0&0\\
0&0&0\\
\end{matrix}
\right)
\Bigg\}.&
\end{eqnarray}
It is  straightforward to verify that:
\begin{enumerate}[I.]
\item $[\lag_i,\lag_j]\subset\lag_{i+j},\ i,j\in\Z$ where we agree that $\lag_i=\{0\}$ if $|i|>2$.\label{grading property}
\item  $\lag_{-1}$ generates $\lag_-:=\lag_{-2}\oplus\lag_{-1}$ as a Lie algebra.
\end{enumerate}
Hence, the direct sum is a $|2|$-grading on $\lag$. As $\dim(\lag_{-2})=1$ and since the Lie bracket $\Lambda^2\lag_{-1}\ra\lag_{-2}$ is non-degenerate, it follows that $\lag_-$ is a Heisenberg algebra and that the $|2|$-grading is a contact grading. 
\bigskip

Notice that $\lag_0\cong\lagl(2,\R)\oplus\laso(n)$. We will view any representation of $\lagl(2,\R)$ or $\laso(n)$ also as a $\lag_0$-module by letting the other factor act trivially. Then  there are isomorphisms of $\lag_0$-modules
\begin{equation}\label{g0 pieces in lap}
\lag_{-2}\cong\Lambda^{2}\mE^\ast,\  \lag_{-1}\cong\mE^\ast\otimes\mF,\  \lag_1\cong\mE\otimes\mF\ \mathrm{and}\ \lag_2\cong\Lambda^{2}\mE
\end{equation}
where $\mE$ and $\mF$  is the defining representation of $\lagl(2,\R)$ and  $\laso(n)$, respectively. By the Jacobi identity, the  Lie bracket $\Lambda^2\lag_{-1}\ra\lag_{-2}$ is $\lag_0$-equivariant. Using the isomorphisms from (\ref{g0 pieces in lap}), it is easy to see that there is  (up to constant) a unique $\lag_0$-equivariant map   
\begin{equation}\label{lie bracket on g-}
\Lambda^2\lag_{-1}\cong\Lambda^2(\mE^\ast\otimes\mF)\ra\Lambda^2\mE^\ast\otimes S^2\mF\ra\Lambda^2\mE^\ast\cong\lag_{-2}
\end{equation}
where the first map is the canonical projection and in the second map we  trace, using the standard inner product, over the second component. 

\subsection{Effective Lie contact structure}\label{section ef LCS}
Put $\lag^i:=\oplus_{j= i}^{2}\ \lag_j$, $\G:=\SO_o(2,n+2)$ where the subscript $o$  stands for the connected component of the identity element of the given group and  $\Ad$ be the adjoint action. We call
\begin{equation}\label{parabolic subgroup}
\gP:=\{g\in\G:\Ad(g)(\lag^i)\subset\lag^i,\ i=-2,\dots,2\}
\end{equation}
the \textit{parabolic subgroup}\footnote{Note that according to  \cite[Definition 3.1.3]{CS}, a parabolic subgroup corresponding to the contact grading on $\lag$ is  any subgroup $\gH$ such that   $\gP_o\subset\gH\subset\gP$} associated to the  contact grading on $\lag$ and 
\begin{equation}\label{Levi subgroup}
\G_0:=\{g\in\gP:\Ad(g)(\lag_i)\subset\lag_i,\ i=-2,\dots,2\}.
\end{equation}
the \textit{Levi subgroup} of $\gP$.
It is easy to see that $\G_0\cong\GL^+(2,\R)\times\SO(n)$ where $\GL^+(2,\R)=\{A\in\GL(2,\R):\ \det A>0\}$.

\medskip

Let $\cm$ be a manifold of dimension $2n+1$ with a contact distribution $H$. Then a \textit{Lie contact structure of type} $(\G,\gP)$ on $\cm$ with underlying contact structure $H$ is given by a $\G_0$-principal bundle $\cp_0:\ccG_0\ra\cm$ together with:
\begin{enumerate} 
 \item [(i)] $\theta_{-2}^\sharp\in\Omega^1(\ccG_0,\lag_{-2})^{\G_0}$ such that\footnote{By abuse of notation, if $\theta$ is a 1-form on a smooth manifold $N$ with values in a vector space $\mV$, then we denote by $\ker\theta$ the set of those tangent vectors $v\in T_xM$ such that $\theta_x(v)=0$.} $\ker(\theta_{-2})=T^{-1}\ccG_0$ and
\item [(ii)] $\theta_{-1}^\sharp\in\Gamma(L(T^{-1}\ccG_0,\lag_{-1}))^{\G_0}$ such that $\ker(\theta_{-1})=\ker(T\cp_0)$
 \end{enumerate}
where $T^{-1}\ccG_0:= (T\cp_0)^{-1}(H)$ and $L(T^{-1}\ccG_0,\lag_{-1})$ is the vector bundle  over $\ccG_0$ whose fiber over $\phi$ is the  space of  linear maps $T^{-1}_\phi\ccG_0\ra\lag_{-1}$ and the superscript $\G_0$ stands for equivariant forms, i.e. $(r^g)^\ast\theta_{-2}=\Ad(g)\circ\theta_{-2}$ where $r^g$ stands for the principal action by $g\in\G_0$ and similarly for $\theta_{-1}$. Moreover, there is a compatibility requirement between $\theta^\sharp:=(\theta^\sharp_{-2},\theta^\sharp_{-1})$ and the Levi form but, as we will not need it, we will not go into details, see \cite[Section 2.3]{CSII}.

Equivalently, this geometric structure is given by a pair of auxiliary vector bundles $E^\sharp$ and $F^\sharp$ of rank $2$ and $n$, respectively, together with  a bundle metric on $F^\sharp$ such that  $H\cong E^{\sharp\ast}\otimes F^\sharp$ and for each $\cx\in\cm$ the Levi form $\cL_{\cx}$ is invariant under the resulting action of the orthogonal group $\gO(F^\sharp_{\cx})$. 

\medskip

An infinitesimal symmetry of the Lie contact structure of type $(\G,\gP)$ is a  $\G_0$-equivariant vector field $\xi_0\in\mathfrak X(\ccG_0)^{\G_0}$ whose flow preserves $T^{-1}\ccG_0$ and  $\cL_{\xi_0}\theta^\sharp_i=\theta^\sharp_i,\ i=-1,-2$ where $\cL$ denotes the Lie derivative. Then $\xi_0$ is $\cp_0$-projetable, i.e.  $T\cp_0(\xi_0)\in\mathfrak X(\cm)$ is a well defined vector field. It is well known that $\xi_0$ is uniquely determined by $\xi$ and hence, we will often view $\xi_0$ as $\xi$ and vice versa without further comment. We call  $\xi_0$ a \textit{transversal infinitesimal symmetry} if for each $\cx\in\cm:\ \xi(\cx)\not\in H_{\cx}$ .

Assume that $\mV$ is an irreducible $\G_0$-module. The space of sections of $V^\sharp:=\ccG_0\times_{\G_0}\mV$ is canonically isomorphic to the space of smooth $\G_0$-equivariant $\mV$-valued functions on $\ccG_0$. Since $\cL_{\xi_0}f$ is an equivariant function provided that $f$ is, it follows that  $\xi_0$ induces a map $\cL_{\xi}:\Gamma(V^\sharp)\ra\Gamma(V^\sharp)$.

\medskip

Let us now consider the homogeneous model $\G/\gP$. The tangent bundle of $\G/\gP$ is isomorphic to the associated vector bundle $\G\times_{\gP}\lag/\lap$. As  $\lag^{-1}$ is a $\gP$-invariant subspace of $\lag$, it follows that $H=\G\times_{\gP}(\lag^{-1}/\lap)$ can be viewed as a  distribution over $\G/\gP$ of co-dimension 1 which  is well known to be contact. Moreover, we can take $\ccG_0=\G/\exp(\lag^1)$ and use the Maurer-Cartan form on $\G$ to construct the forms $\theta^\sharp_i,\ i=-2,-1$. This gives the homogeneous Lie contact structure of type $(\G,\gP)$ on the homogeneous model.

\smallskip

Let us now be more explicit.
It is easy to see that $\gP$  is the stabilizer of the totally isotropic 2-plane $[\tau_1,\tau_2]$ where we denote by square brackets the linear span of the given vectors. As $\G$ acts transitively on the  Grassmannian $\pgr$ of totally isotropic 2-dimensional subspaces in $\R^{2,n+2}$,  we may view $\G/\gP$ as $\pgr$.  

In order to view $\G/\gP$ as a Stiefel variety, choose subspaces $\R^2$ and $\R^{n+2}$ of $\R^{2,n+2}$ with bases $\{f_1,f_2\}$ and $\{e_1,\dots,e_{n+2}\}$, respectively, such that:
\begin{enumerate}
\item[(i)] $\R^{2,n+2}=\R^2\oplus\R^{n+2}$,
\item[(ii)]  $h|_{\R^2}$  is  negative definite  with orthonormal basis $\{f_1,f_2\}$ and
\item[(iii)] $h|_{\R^{n+2}}$ is  positive definite with orthonormal basis $\{e_1,\dots,e_{n+2}\}$. 
\end{enumerate}
Fix  $\cx\in\pgr$. Then  there are unique $v_1,v_2\in\R^{n+2}$ such that $\cx=[f_1+v_1,f_2+v_2]$. Since $\cx$ is totally isotropic, it follows that $(v_1,v_2)$ is an orthonormal 2-frame. On the other hand, any such orthonormal 2-frame determines a maximal totally isotropic  subspace in $\R^{2,n+2}$. We see that $\pgr$ is isomorphic to the Stiefel manifold $\sv$.

The stabilizer $\cgK$ of $\R^2$ and $\R^{n+2}$ in $\G$ is (with respect to the new basis of $\R^{2,n+2}$) the standard block diagonal subgroup $\SO(2)\times\SO(n+2)$. Obviously, $\cgK$ is a maximal compact subgroup of $\G$ and the canonical $\cgK$-action on $\sv$ is: $(A,B).(v_1|v_2):=B.(v_1|v_2).A^{-1}$ where $A\in\SO(2),\ B\in\SO(n+2)$ and we view $(v_1,v_2)$ as the corresponding matrix $(v_1|v_2)\in M_{(n+2)\times2}(\R)$.
It is easy to see that the action is transitive and  that 
\begin{equation}\label{stabilizer of cx0}
\cgH=\Bigg\{\left(
\begin{matrix}
A&0&0\\
0&A&0\\
0&0&B\\
\end{matrix}
\right)\Bigg|A\in\SO(2),B\in\SO(n)\Bigg\}
\end{equation}
is the stabilizer of $\cx_0:=(e_{1},e_{2})\in\sv$. Hence, $\sv\cong\cgK/\cgH$ and it is straightforward to verify that the map $\pgr\ra\sv$ defined above is a $\cgK$-equivariant diffeomorphism.

\smallskip

We can view a tangent vector at $(v_1,v_2)\in\sv$ as a skew-symmetric map $\psi:\R^{n+2}\ra\R^{n+2}$, i.e. $h|_{\R^{n+2}}(\psi(u),v)=-h|_{\R^{n+2}}(u,\psi(v)), \ u,v\in\R^{n+2}$.
\begin{lemma}\label{lemma contact distribution over sv}
With the notation introduced above, $\psi\in H_{(v_1,v_2)}$  if and only if  $\psi([v_1,v_2])\subset([v_1,v_2])^\bot$.
\end{lemma}
\begin{proof}
Let $G_2(\R^{n+4})$ be the Grassmannian of 2-dimensional subspaces in $\R^{n+4}$ and $\cx\in G_2(\R^{n+4})$. Then $T_{\cx} G_2(\R^{n+4})$ is canonically isomorphic to the space of linear maps $\psi:\cx\ra\R^{n+4}/\cx$. If $\cx\in\pgr$, then $\psi\in T_{(v_1,v_2)}\pgr$ if and only if the bilinear form on $\cx: (u,v)\mapsto h(u,\psi(v))$  is skew-symmetric. Moreover, $\psi\in H_{\cx}$ if and only if $\im(\psi)\subset(\cx)^\bot$ (with respect to $h$). If $\cx=[f_1+v_1,f_2+v_2]$, then  $\R^{n+4}/\cx\cong\R^{n+2}=[v_1,v_2]\oplus[v_1,v_2]^\bot$ and without loss of generality we may assume that  $\im(\psi)\subset\R^{n+2}$. Then the claim easily follows.
\end{proof}

\subsection{Non-effective Lie contact structure}\label{section non-ef LCS}

Let $\tilde\G:=\Spin_o(2,n+2)$ and $\rho_{2,n+2}:\tilde\G\ra\G$ be the standard 2:1 covering. We define the parabolic subgroup $\tilde\gP$ of $\tilde\G$ corresponding to the contact grading on $\lag$ as before for $\G$ and denote by $\tilde\G_0$ its Levi subgroup.
It is clear that $\G/\gP\cong\tilde\G/\tilde\gP$ and  that  $\tcgK:=\rho_{2,n+2}^{-1}(\cgK)$ is a maximal compact subgroup of $\tilde\G$ which, see Lemma \ref{lemma inclusion of spinc inside spin}, is isomorphic to $\Spin^c(n+2)$.  

\begin{lemma}\label{lemma groups in covering}
The stabilizer $\tcgH$ of $\cx_0$ inside $\tcgK$ is isomorphic to $\SO(2)\times\Spin(n)$ and  $\rho_{2,n+2}|_{\tcgH}:\tcgH\ra\cgH$  is equal to $\Id_{\SO(2)}\times\rho_n$.
\end{lemma}
\begin{proof}
Recall (\ref{stabilizer of cx0}).  It follows that $\tcgH$ is isomorphic to the quotient of the group $\{(e^{it},e^{is},a)|\ e^{it},e^{is}\in\gU(1),\ a\in\Spin(n):\ e^{it}=\pm e^{is}\}$
by the normal subgroup generated by $\{(-1,-1,1),(-1,1,-1)\}$ such that, if we denote by $\langle e^{it},e^{is},a\rangle$ the class of $(e^{it},e^{is},a)$ in the quotient, the map $\rho_{2,n+2}|_{\tcgH}:\tcgH\ra\cgH=\SO(2)\times\SO(n)$ is $\langle e^{it},e^{is},a\rangle\mapsto(\rho_2(e^{is}),\rho_n(a))$. 
Consider  $\tcgH\ra\SO(2)\times\Spin(n),\ \langle e^{it},e^{is},a\rangle\mapsto(\rho_2(e^{is}),e^{i(t-s)}a)$. Then it is straightforward to verify that the map is well defined, bijective and that it is a homomorphism.  The inverse homomorphism is  $\SO(2)\times\Spin(n)\ra\tcgH,\ (e^{is},a)\mapsto\langle e^{\frac{is}{2}},e^{\frac{is}{2}},a\rangle$. Composing this with $\rho_{2,n+2}|_{\tilde\gH}$, the last claim follows.
\end{proof}

It is clear that we can choose the subspaces $\R^2$ and $\R^{n+2}$ and the bases $\{f_1,f_2\}$ and $\{e_1,\dots,e_{n+2}\}$ so that $\cgH$ and $\tcgH$ is a maximal compact subgroup of $\G_0$ and $\tilde\G_0$, respectively. By  Lemma \ref{lemma groups in covering}, it follows that 
$\tilde\G_0\cong\GL^+(2,\R)\times\Spin(n)$ and that  $\rho_{2,n+2}|_{\tilde\G_0}:\tilde\G_0\ra\G_0$ is equal to $\Id_{\GL^+(2,\R)}\times\rho_n$.

\medskip

A \textit{Lie contact structure of type} $(\tilde\G,\tilde\gP)$ on a manifold $\cm$ of dimension $2n+1$ with a contact structure $H$ is given by a Lie contact structure $(\ccG_0,\cm,\tcp_0,\theta^\sharp)$ of type $(\G,\gP)$ as in Section \ref{section ef LCS} and a lift $\tcp_0:\tccG_0\ra\cm$  of $\ccG_0\ra\cm$ to $\tilde\G_0$-structure.   We denote by $\tilde\theta^\sharp$  the pullback of $\theta^\sharp$ to $\tccG_0$.

\subsection{2-Dirac complex}\label{section Dirac complex}
The subspace of diagonal matrices is a maximal commutative subalgebra of $M(2,\R)=\lagl(2,\R)$. Hence, a weight of $\lagl(2,\R)$, and thus also of $\GL^+(2,\R)$, can be given by a pair of real numbers. We denote by  $\mE_\lambda$ a real irreducible $\GL^+(2,\R)$-module with highest weight $\lambda$ and by  $\Sp$ the complex spinor representation of $\Spin(n)$. Then  $\mV_\lambda:=\mE_{\lambda}\otimes\Sp$ is an irreducible $\tilde\G_0$-module. We call (\ref{k-Dirac sequence}) the \textbf{2-Dirac complex}.

\begin{thm}\label{thm 2-Dirac complex}
Let $(\tccG_0,\cm,\tcp_0,\tilde\theta^\sharp)$ be a Lie contact structure of type $(\tilde\G,\tilde\gP)$. Let
$\mV_{\lambda_i},\ i=0,1,2,3$ be the irreducible $\tilde\G_0$-modules associated to
\begin{align}\label{weight in Dirac complex}
\lambda_0&=\frac{1}{2}(n-1,n-1),\ \lambda_1=\frac{1}{2}(n+1,n-1),\\
\lambda_2&=\frac{1}{2}(n+3,n+1)\ \mathrm{and}\ \lambda_3=\frac{1}{2}(n+3,n+3)\nonumber
\end{align}
as explained above and put $V^\sharp_i:=\tccG_0\times_{\tilde\G_0}\mV_{\lambda_i}$.

Then there is a sequence of natural linear differential operators 
\begin{equation}\label{k-Dirac sequence}
\Gamma(V^\sharp_0)\ra\Gamma(V^\sharp_1)\ra\Gamma(V^\sharp_2)\ra\Gamma(V^\sharp_3)
\end{equation}
of order 1,2,1 respectively. The sequence is on the homogeneous model a complex.
\end{thm}
\begin{proof}
The operators were (see \cite{S}) constructed using the machinery of the splitting operators and the curved Casimir operator. As these are linear and natural differential operators, the operators in  (\ref{k-Dirac sequence}) are also linear and natural. Moreover, it was verified directly that on the homogeneous space it is a complex. 
\end{proof}

\begin{remark}\label{remark bundles in Dirac complex}
1) Notice that $\mE_{\lambda_0}\cong\mE_{\lambda_3}\cong\R$ and $\mE_{\lambda_1}\cong\mE_{\lambda_2}\cong\R^2$ as $\SL(2,\R)$-modules. 

2) The 2-Dirac complex is not a BGG-sequence. However, it (see \cite{TS}) arises as the direct image of a relative BGG complex and so it fits into the scheme of the Penrose transform. Using this approach, one can reprove that on the homogeneous model the $2$-Dirac complex is  a complex of differential operators.  
\end{remark}

\section{PCS-structures of Grassmannian type}\label{section PACS structures}
We will  recall (see Section \ref{section PACS structures of GT}) the definition of the PCS-structure of Grassmannian type. Original results are given in Sections \ref{section PCS-quotient} and \ref{section descending natural differential operators}.

\subsection{PCS structure of Grassmannian type}\label{section PACS structures of GT}
Let $M$ be a manifold of dimension $2n\ge6$. We call a line subbundle $\ell$ of  $\Lambda^2T^\ast M$ an \textit{almost conformally symplectic structure} (or $acs$-structure for short) if each $\omega\in\ell$ is either zero or a non-degenerate 2-form. We call $\ell$ a \textit{conformally symplectic structure} (or $cs$-structure for short) if in addition, each $x\in M$ has an open neighborhood $U$ with an everywhere non-zero section  $\sigma:U\ra\ell|_U$  which is closed as a 2-form. 

Recall (see  \cite[Section 4.1.3]{CS}) that an  \textit{almost Grassmannian structure of type} $(p,q)$ on a smooth manifold  $N$ of dimension $p.q$ is given by two auxiliary vector bundles $E$ and $F$ of rank $p$ and $q$, respectively, together with an isomorphism $TN\ra E^\ast\otimes F$ and a fixed trivialization of $\Lambda^pE\otimes\Lambda^qF$.   In this paper we will always assume that $E,F$ are oriented.
It is easy to see that the structure is  equivalent to a reduction of the frame bundle of $N$ to structure group $\G_{gr}:=\{(A,B)\in\GL^+(p,\R)\times\GL^+(q,\R):\ \det(A)=\det(B)^{-1}\}$ so that  $E$ and $F$ are associated to the standard representations  of $\G_{gr}$ on $\R^p$ and $\R^q$, respectively. Notice that   $\Lambda^2T^\ast N\cong\Lambda^2E\otimes S^2 F^\ast\oplus S^2E\otimes \Lambda^2F^\ast$.

\begin{df}\label{df PCS structure of Grassmannian type}
A parabolic almost conformally symplectic structure (or PACS-structure for short) of Grassmannian type on a manifold $M$ of dimension $2n\ge6$ is  given by a Grassmannian structure of type $(2,n)$ with auxiliary oriented bundles $E$ and $F$  together with an almost conformally symplectic structure $\ell\subset\Lambda^2T^\ast M$ which is Hermitian in the Grassmannian sense, i.e. $\ell$ is contained in $\Lambda^2E\otimes S^2F^\ast$. We call the structure a parabolic conformally symplectic structure (or PCS-structure for short) of Grassmannian type if $\ell$ is conformally symplectic.
\end{df}

As $\ell$ and $\Lambda^2E$ are line bundles and $F$ is oriented, we see that a Hermitian $acs$-structure $\ell$ determines a conformal class of bundle metrics on $F$ and conversely, a conformal class of metrics on $F$ determines a Hermitian $acs$-structure. The signature of any metric from the class is an invariant of the $acs$-structure. Here we will consider only the positive definite case.
\medskip

A PCS-structure of Grassmannian type on $M$ is equivalent to a certain $G$-structure $p_0:\cG_0\ra M$. Any such structure carries a tautological form $\theta$, called the \textit{soldering form}, which is an equivariant $\lag_{-1}$-valued 1-form on $\cG_0$ whose pointwise kernel is the vertical subbundle.

\begin{lemma}\label{lemma G0 structure}
 A PACS-structure of Grassmannian type is equivalent to a $\G_0$-structure $(\cG_0,M,p_0,\theta)$. 
\end{lemma}
\begin{proof}
By above, a conformal class of positive definite metrics on $F$ is equivalent to a reduction of  the structure group from $\G_{gr}$ to $\G_{gr}\cap(\GL^+(2,\R)\times\CSO(n))$. Since $(A,B)\mapsto(A,\det(A)^{\frac{1}{n}}.B)$ is an isomorphism between the latter group and $\G_0$, the claim follows.
\end{proof}

There is (see \cite{CSI}) a canonical linear connection on $TM$ which is compatible with a given PACS-structure (of any type). This linear connection is determined by a normalization condition on its torsion, see \cite[Corollary 4.3]{CSI}. It turns out that the intrinsic torsion splits into two components. The first component is an invariant of the underlying $acs$-structure $\ell$, and it vanishes if and only if $\ell$ is a $cs$-structure. The other component is called a \textit{harmonic torsion}. If the PACS-structure is of Grassmannian type, then there is an isomorphism $\Lambda^2T^\ast M\otimes TM\cong(\Lambda^2E\otimes S^2F^\ast\oplus S^2E\otimes\Lambda^2 F^\ast)\otimes E^\ast\otimes F^\ast$. If it is a PCS-structure, then  the  canonical connection is pinned down by the requirement that its torsion is a section of  
\begin{equation}
\Lambda^2E\otimes E^\ast\otimes (S^3F^\ast)_0\oplus (S^2E\otimes E^\ast)_0\otimes(\Lambda^2F^\ast\otimes F)_0
\end{equation}
where  the subscript 0 denotes the trace-free part.

\medskip

Let us now recall   \cite[Definition 2.4]{CSII}.

\begin{df}\label{df PCS quotient}
Let  $(\ccG_0,\cm,\cp_0,\theta^\sharp)$ be a Lie contact structure of type $(\G,\gP)$ with a transversal infinitesimal symmetry $\xi_0\in\mathfrak X(\ccG_0)^{\G_0}$. Then a PCS-quotient by $\xi_0$ is  a PCS-structure $(\cG_0,M,p_0,\theta)$ of Grassmannian type together with a morphism $q_0:\ccG_0\ra\cG_0$ of  $\G_0$-principal bundles such that the  following holds:
\begin{enumerate}
\item $q_0$ is surjective with connected fibers.
\item For each $u^\sharp\in\ccG_0$ the kernel of $T_{u^\sharp}q_0$ is spanned by $\xi_0(u^\sharp)$.
\item The restriction of $q_0^\ast\theta$ to $T^{-1}\ccG_0$ coincides with  $\theta^\sharp_{-1}$.
\end{enumerate}
\end{df}

We will be interested here in PCS-quotients whose fibers are circles. Recall Section \ref{section spinc structure} for the definition of the  Chern class $c_1(\cm)$ associated to a circle bundle $\cm\ra M$.

\begin{lemma}\label{lemma compatible circle bundle}
Let $(\cG_0,M,p_0,\theta)$ be a PCS-structure of Grassmannian type such that the underlying $cs$-structure $\ell$ is trivialized by a global symplectic form $\mathbf\Omega$. Assume that  $\frac{-1}{2\pi}[\mathbf\Omega]$  is integral and that $q:\cm\ra M$ is a circle bundle whose Chern class is equal to this integral class. Then
\begin{enumerate}
\item[(i)] the unique principal connection $i\alpha\in\Omega^1(M^\sharp)^{\gU(1)}$ determined by $q^\ast\mathbf\Omega=d\alpha$ is a contact form,
\item[(ii)] there is a  parabolic contact structure $(\ccG_0,\cm,\cp_0,\theta^\sharp)$ of type $(\G,\gP)$ with the underlying contact structure $H:=\ker(\alpha)$  such that the Reeb field $\xi$ associated to $\alpha$ is a transversal infinitesimal symmetry of the Lie contact structure and 
\item[(iii)] there is a PCS-quotient $q_0:\ccG_0\ra\cG_0$ by $\xi$.
\end{enumerate}
\end{lemma}

\begin{proof}
(i) By assumptions, $\mathbf\Omega^{\wedge^n}$ is a volume form on $M$ and so $\alpha\wedge q^\ast\mathbf\Omega^{\wedge^n}$ is a volume form on $\cm$.
(ii) and (iii) As $\alpha$ is a contact form and $d\alpha=q^\ast\mathbf\Omega$, it follows that $q:\cm\ra M$ is (see \cite[Section 2.4]{CSII}) a reduction by the transverse infinitesimal contactomorphism $\xi$. Then (ii) and (iii) follow from \cite[Theorem 2.7]{CSII}. 
\end{proof}

We will call $\alpha$ from Lemma \ref{lemma compatible circle bundle} a \textit{compatible contact form}. 

\begin{remark}\label{remark construction of lift}
Let us also recall that in Lemma \ref{lemma compatible circle bundle} one can take as  $\ccG_0$  the pullback bundle $q^\ast\cG_0$, as $\theta^\sharp_{-1}$  the restriction of $q^\ast_0\theta$ and then there is a unique way how to define $\theta_{-2}^\sharp$ so that $\theta^\sharp=(\theta^\sharp_{-1},\theta^\sharp_{-2})$ together with $\cp_0:\ccG_0\ra\cm$ give a Lie contact structure of type $(\G,\gP)$.
\end{remark}

\subsection{Compatible $\G_0^c$-structures and Lie contact structures.}
\label{section PCS-quotient}

Put $\G_0^c:=\GL^+(2,\R)\times\Spin^c(n)$. Then there are two Lie group homomorphisms and a short exact sequence
\begin{align}
 \G_0^c\ra\G_0 \ \ \mathrm{and}\ \ \G_0^c\ra\gU(1)\ \ \mathrm{and}\ \ 
 0\ra\Z_2\ra\G_0^c\ra\gU(1)\times\G_0\ra0
\end{align}
induced by  $\rho_n^c:\Spin^c(n)\ra\SO(n)$ and $\varsigma_n:\Spin^c(n)\ra\gU(1)$  and (\ref{third ses}), respectively. 

Assume that $\cG_0^c\ra M$ is a $\G_0^c$-principal bundle which lifts a $\G_0$-principal bundle $\cG_0\ra M$. Then  $\cm:=\cG_0^c\times_{\G_0^c}\gU(1)$ is the total space of a $\gU(1)$-principal bundle over $M$ which we call the associated \textit{determinant circle bundle}. The subgroup $\Z_2$ is normal and the quotient space $\cG_0':=\cG_0^c/\Z_2$ is the total space of a $\G_0\times\gU(1)$-principal bundle over $M$. The canonical projection $\cG_0'\ra\cm$ is a $\G_0$-principal bundle which is isomorphic to the pullback bundle $q^\ast\cG_0\ra\cm$. All together there is a commutative diagram 
\begin{equation}\label{com diagram with G0c}
\xymatrix{q^\ast\cG_0\cong\cG_0'\ar@{~>}[d]|{\G_0}&&\ar@{~>}|{\Z_2}[ll]\cG_0^c\ar@{=}[r]\ar@{~>}[d]|{\tilde G_0}&\cG_0^c\ar@{~>}|{\gU(1)}[r]\ar@{~>}[d]|{\G_0^c}&\cG_0\ar@{~>}[d]|{\G_0}\\
\cm\ar@{=}[rr]&&\cm\ar@{~>}|{\gU(1)}[r]&M\ar@{=}[r]&M\\} 
\end{equation}
where we use same notation as in (\ref{com diagram with Spinc}).

\medskip

Assume now that  $\cG_0\ra M$ is the principal bundle of a PCS-structure of Grassmannian type.
A principal connection on $\cG_0^c\ra M$ which lifts the canonical principal connection on  $\cG_0\ra M$ is determined by a choice of a principal connection on  $\cm\ra M$. If the principal connection on $\cm\ra M$ is compatible, as defined at the end of Section \ref{section PACS structures of GT}, with the underlying $cs$-structure, then  there is an additional geometric structure on $\cm$.

\begin{prop}\label{thm compatible G0c structure}
Let $(\cG_0,M,p_0,\theta)$ be a PCS-structure of Grassmannian type with a lift  $p_0^c:\cG_0^c\ra M$ of $\cG_0\ra M$ to a $\G_0^c$-structure. Let $q:\cm\ra M$ be the associated determinant circle bundle. Suppose that the underlying $cs$-structure $\ell$ is trivialized by a global symplectic form $\mathbf\Omega$ such that $\frac{-1}{2\pi}[\mathbf\Omega]=c_1(\cm)$. Let $\alpha$ be the  compatible contact form as in Lemma \ref{lemma compatible circle bundle}.

Then $\tcp_0:\cG_0^c\ra\cm$ is the principal bundle of a Lie contact structure of type $(\tilde\G,\tilde\gP)$ with the underlying contact distribution $H:=\ker(\alpha)$ and the Reeb field $\xi$ associated to $\alpha$ is a transversal infinitesimal symmetry of the Lie contact structure.
\end{prop}
\begin{proof}
As we have seen above, there is the $\G_0$-principal bundle $\cG_0'\ra\cm$ underlying $\cG_0^c\ra\cm$ and conversely, $\cG_0^c\ra\cm$ is a lift of the $\G_0$-principal bundle to structure group $\tilde\G_0$. Hence, it is enough to show that $\cG_0'\ra\cm$ is the principal bundle of a Lie contact structure of type $(\G,\gP)$ with underlying contact distribution $H=\ker(\alpha)$ and that $\xi$ is an transversal infinitesimal symmetry. But this follows from Lemma \ref{lemma compatible circle bundle} and Remark \ref{remark construction of lift} since we know that the bundle $\cG_0'\ra\cm$ is isomorphic to $q^\ast\cG_0\ra\cm$.
\end{proof}

\subsection{Descending linear and natural differential operators}\label{section descending natural differential operators}
As we have seen in Section \ref{section ef LCS}, an infinitesimal symmetry of the Lie contact structure has a canonical action, which we denote by $\cL_{\xi}$, on the space of sections of any associated vector bundle. We call a  differential operator \textit{natural} or \textit{invariant} if it intertwines with the actions of any infinitesimal symmetry on the corresponding spaces of sections. See also \cite[Sections 2.2 and 2.5]{CSIII}. 

\begin{thm}\label{thm descending}
Let $p_0:\cG_0\ra M$, $\tcp_0:\cG_0^c\ra\cm$ and $\xi$ be as in Proposition \ref{thm compatible G0c structure}.

(i) Let  $(\mV,\varrho)$ be a $\tilde\G_0$-representation  and  $(\mV,\varrho^c)$ be the associated $\G_0^c$-module as in (\ref{spinc representation on spinors}). Put $V:=\cG^c_0\times_{\G_0^c}\mV$ and $V^\sharp:=\cG^c_0\times_{\tilde\G_0}\mV$. Then there is a canonical linear isomorphism 
\begin{equation}\label{invariant differential equation}
\varphi_\mV: \Gamma(V)\ra\{v\in\Gamma(V^\sharp):\ \cL_{\xi}v=-i.v\}.
\end{equation}

(ii) Let $(\mW,\vartheta)$ be another $\tilde\G_0$-module, $(\mW,\vartheta^c)$ be the induced representation of $\G_0^c$ and put $W^\sharp:=\cG_0^c\times_{\tilde\G_0}\mW$. Suppose that $D^\sharp:\Gamma(V^\sharp)\ra\Gamma(W^\sharp)$ is a linear differential operator which is natural to the Lie contact structure of type $(\tilde\G,\tilde\gP)$. 
Then there is a unique linear differential operator $D:\Gamma(V)\ra\Gamma(W)$ such that the following diagram commutes:
\begin{equation}
 \xymatrix{\Gamma(V^\sharp)\ar[r]^{D^\sharp}&\Gamma(W^\sharp)\\
 \Gamma(V)\ar[u]^{\varphi_\mV}\ar[r]^{D}&\Gamma(W)\ar[u]^{\varphi_\mW}.}
\end{equation}
\end{thm}
\begin{proof}
(i) The space $\Gamma(V)$ is canonically isomorphic to the space  of smooth $\mV$-valued $\G_0^c$-equivariant functions with domain $\cG_0^c$. As $\G_0^c$ is generated by two commuting subgroups $\tilde\G_0$ and $\gU(1)$, a function $f:\cG_0^c\ra\mV$ is $\G_0^c$-equivariant if and only if it is equivariant with respect to $\tilde\G_0$ and $\gU(1)$. As $\gU(1)$ is connected, $f$ is $\gU(1)$-equivariant if and only if it transforms accordingly with respect to the fundamental vector field $\zeta$ associated to  $(i,0)\in\lau(1)\oplus\lag_0=\lag^c_0$. This means that  $\zeta(f)=\frac{d}{dt}|_{t=0}e^{-it}.f=-i.f$. Now $\zeta$ is uniquely determined by $(\tcp_0)^\ast\alpha(\zeta)=1$ and the fact that it lies in the vertical distribution of $\cG_0^c\ra\cG_0$. On the other hand, $\alpha$ is a contact form with Reeb field $\xi$ and so $(\tcp_0)^\ast\alpha(\tilde\xi_0)=\alpha(\xi)=1$. As also $\tilde\xi_0$ projects to zero under $\cG_0^c\ra\cG_0$, we find that $\zeta=\tilde\xi_0$ and so the claim follows from the 
definition of $\cL_\xi$.

(ii) As $D^\sharp$ is a natural differential operator and $\xi$ is an infinitesimal symmetry, $D^\sharp$ commutes with $\cL_\xi$. So if $v\in\Gamma(V^\sharp)$ satisfies $\cL_{\xi}v=-iv$, then  $\cL_{\xi}D^\sharp v=D^\sharp\cL_{\xi}v=D^\sharp(-iv)=-iD^\sharp v$. The existence of $D$ then follows. Arguing as in  \cite[Theorem 2.4]{CSIII}, one can show that $D$ is a differential operator.   
\end{proof}

\begin{remark} \label{remark not pcs quotient}
Suppose that we are given a PCS-quotient $q_0:\ccG_0\ra\cG_0$ by a transversal infinitesimal symmetry $\xi_0$ as in Definition \ref{df PCS quotient}. Let $\mV$ be an irreducible $\G_0$-module and put $V^\sharp:=\ccG_0\times_{\G_0}\mV$ and $V:=\cG_0\times_{\G_0}\mV$. Then (see \cite[Lemma 2.4]{CSIII}) there is a canonical isomorphism $\Gamma(V)\ra\{v\in\Gamma(V^\sharp):\ \cL_\xi v=0\}$, i.e. we have  identified $\Gamma(V)$ with the subspace of $\Gamma(V^\sharp)$ of those sections that satisfy the invariant differential equation $\cL_\xi v=0$. Let $D^\sharp:\Gamma(V^\sharp)\ra\Gamma(W^\sharp)$ be a natural and linear differential operator as above.  Then by the same argument as in the proof of Theorem \ref{thm descending}, the operator $D^\sharp$ descends to a linear differential operator $D:\Gamma(V)\ra\Gamma(W)$ which is natural to the PCS-structure.

The assumptions in Theorem \ref{thm descending} lead to a different invariant differential equation, namely the one given in (\ref{invariant differential equation}). As we observed in the proof, this differential equation comes from the fact that $\tilde\xi_0$ is a transversal infinitesimal symmetry of the Lie contact structure and at the same time it is the fundamental vector field corresponding to a generator of the center of $\Spin^c(n)$. This is the main difference between the spin$^c$ case studied in this article and  the scheme presented in the series \cite{CSI}, \cite{CSII} and \cite{CSIII} as,  when one deals with PCS-quotients,  the transversal infinitesimal symmetry is not "visible" downstairs on the PCS-structure but only upstairs on the parabolic contact structure.  Nevertheless, as we argued above, the main idea of descending natural and linear differential operators goes through also with in the spin$^c$ case.
\end{remark}

\section{Elliptic complex on $\rgr$}\label{section final}
In Section \ref{section final} we will prove the main result of this article.
We will show (see Proposition \ref{thm pcs quotient sc to rgr}) that there is a homogeneous PCS-structure of Grassmannian type on  the Grassmannian $\rgr$ of oriented 2-planes in $\R^{n+2}$ which descends from the homogeneous Lie contact structure of type $(\G,\gP)$ on $\sv$.
Application of Theorem \ref{thm descending} to the 2-Dirac complex induces (see Theorem \ref{thm descended complex}) the complex  of invariant differential operators on $\rgr$ that starts with  the 2-Dirac operator associated to a $\G_0^c$-structure as outlined in Introduction. The descended complex    is (see Theorem \ref{thm ellipticity and index}) elliptic and  its index is zero.

\subsection{Homogeneous PCS-structure on $\rgr$.}
There is a canonical projection $q:\sv\ra\rgr$ which sends an orthonormal 2-frame $(v_1,v_2)$ to its oriented span $[v_1,v_2]^+$. Recall Section \ref{section ef LCS} that $\sv$ is the homogeneous space of the Lie contact structure of type $(\G,\gP)$. Finally, we will need that  $\gK:=\SO(n+2)$ has  a canonical action on $\rgr$. 

\begin{prop}\label{thm pcs quotient sc to rgr}
Let $n\ge3$ and $(\ccG_0,\sv,\cp_0,\theta^\sharp)$ be the homogeneous Lie contact structure of type $(\G,\gP)$ and $\xi$ be an infinitesimal generator of the left action of the center $Z(\cgK)$ of $\cgK$ on $\sv$. 

Then $\xi$ is a transversal infinitesimal symmetry and there is a homogeneous $\gK$-invariant PCS-structure $(\cG_0,\rgr,p_0,\theta)$ of Grassmannian type together with a PCS-quotient $q_0:\ccG_0\ra\cG_0$ by  $\xi$ which covers the map $q:\sv\ra\rgr$ given above.

The underlying Grassmannian structure is the standard one and the $cs$-structure corresponds to the ray of bundle metrics on $F$ that contains the metric induced by the standard inner product on $\R^{n+2}$.
\end{prop}
\begin{proof}
As $\xi$ is an  infinitesimal generator of the left action, it is an infinitesimal symmetry. By the action of $\cgK$ on $\sv$ given in  Section \ref{section ef LCS}, it follows that $\xi((v_1,v_2))\in T_{(v_1,v_2)}\sv$ corresponds to an infinitesimal rotation in the plane $[v_1,v_2]$. By Lemma \ref{lemma contact distribution over sv},  $\xi\not\in H_{(v_1,v_2)}$ and thus, $\xi$ is a transversal infinitesimal symmetry.

By definition, the fibers of  $q$ coincide with the orbits of the left action of $Z(\cgK)$. These orbits are circles that can be also viewed as leaves for the distribution spanned by $\xi$. The map $\cp_0$  intertwines the action of $Z(\cgK)$ on $\ccG_0$ and $\sv$.  Since $Z(\cgK)$ acts transitively and freely on each leaf on $\sv$, the restriction of $\cp_0$ to each leaf on $\ccG_0$ is a diffeomorphism onto the underlying leaf on $\sv$. By  \cite[Theorem 2.5]{CSII}, there is a PCS-structure of Grassmannian type $(\cG_0,\rgr,p_0,\theta)$  and a PCS-quotient $q_0:\ccG_0\ra\cG_0$ by the symmetry $\xi$ which covers $q$. Moreover, the left action by any element from $\cgK$ is a diffeomorphism $\sv\ra\sv$ which maps leaves to leaves.  Hence, the action descends to $\rgr$ and it factorizes to the standard action of  $\gK=\cgK/Z(\cgK)$ on $\rgr$ by symmetries of the PCS-structure.

As $\cG_0=Z(\cgK)\setminus \ccG_0$ and $\ccG_0\cong\cgK\times_{\cgH}\G_0$, we see that $\cG_0\cong\gK\times_\gH\G_0$, i.e. it is an extension of the principal bundle $\gH\ra\gK\ra\rgr$ where $\gH$ is the stabilizer of the oriented 2-plane $x_0:=[e_1,e_2]^+$. From this the last claim readily follows.
\end{proof} 

Let us also mention that the unique compatible linear connection is the Levi-Civita connection of the symmetric Riemannian structure on $\rgr$. This is a special symplectic connection with holonomy contained in $\SL(2,\R)\times\SO(n)$, see \cite{CaS} for details. 

\begin{lemma}
The principal bundle $\cG_0\ra\rgr$ of the homogeneous PCS-structure of Grassmannian type  does not lift to a $\tilde\G_0$-structure.
\end{lemma}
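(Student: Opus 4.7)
The plan is to identify the obstruction to such a lift with the second Stiefel--Whitney class $w_2(F)\in H^2(\rgr,\Z_2)$ of the rank-$n$ auxiliary bundle $F$, and then to check that this class is non-zero on the oriented Grassmannian. By Lemma~\ref{lemma G0 structure}(2) there is a canonical isomorphism $\cG_0\cong\cF^+(E)\times_{\rgr}\cOF^+(F)$, so a lift of $\cG_0$ to a $\tilde\G_0=\GL^+(2,\R)\times\Spin(n)$-structure is precisely a spin structure on $F$ (compatible with the $\GL^+(2,\R)$-factor $\cF^+(E)$), and such a lift exists if and only if $w_2(F)=0$. It therefore suffices to show $w_2(F)\ne 0$.

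The next step is to reduce the computation to $E$. By the last assertion of Theorem~\ref{thm pcs quotient sc to rgr}, the underlying Grassmannian structure on $\rgr$ is the standard one, so $E$ is the tautological oriented rank-$2$ bundle and $F$ is its orthogonal complement inside the trivial bundle $\rgr\times\R^{n+2}$. In particular $E\oplus F$ is trivial, so the Whitney sum formula gives $w(F)=w(E)^{-1}$; combining this with $w_1(E)=0$ (because $E$ is oriented) yields $w_2(F)=w_2(E)$.

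The main step is then to show $w_2(E)\ne 0$. Since $E$ is an oriented real rank-$2$ bundle, $w_2(E)$ is the mod-$2$ reduction of the integral Euler class $e(E)\in H^2(\rgr,\Z)$, so it is enough to prove that $e(E)$ generates $H^2(\rgr,\Z)\cong\Z$. To this end I would identify $\sv$ with the unit sphere bundle of $E$ via $(v_1,v_2)\mapsto([v_1,v_2]^+,v_1)$, which intertwines the right $\SO(2)$-action on $\sv$ described before Lemma~\ref{lemma iso gr and li} with the rotation action on $S(E)$; consequently the circle bundle $\sv\ra\rgr$ has Euler class $e(E)$. Writing $\sv=\SO(n+2)/\SO(n)$ as an $S^n$-bundle over $S^{n+1}$ via the standard fibration shows that $\sv$ is $(n-1)$-connected, so for $n\ge 3$ one has $H^1(\sv,\Z)=H^2(\sv,\Z)=0$. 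Feeding this into the Gysin sequence of the circle bundle $\sv\ra\rgr$ forces cup product with $e(E)$ to give an isomorphism $H^0(\rgr,\Z)\ra H^2(\rgr,\Z)$, finishing the argument.

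The only non-formal step is this last Gysin computation; the reduction to $w_2(E)$ and the identification of $E$ with the tautological bundle are essentially bookkeeping once the standard Grassmannian structure from Theorem~\ref{thm pcs quotient sc to rgr} is invoked. The main thing worth double-checking is the connectivity of the Stiefel manifold and the claim that $\sv\ra\rgr$ is the unit circle bundle of $E$.
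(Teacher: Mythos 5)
Your proposal is correct and follows essentially the same route as the paper: reduce via Lemma \ref{lemma G0 structure}(2) to the existence of a spin structure on $F$, i.e.\ to $w_2(F)=0$, and then use $w_2(F)=w_2(E)=\rho(e(E))$ together with the fact that $e(E)$ generates $H^2(\rgr,\Z)\cong\Z$. The only difference is that you actually verify the facts the paper quotes as well known (the Whitney-sum reduction $w_2(F)=w_2(E)$ and the Gysin-sequence computation identifying $\sv$ with the unit circle bundle of $E$), and these verifications are accurate.
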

\begin{proof}
It is well known that for the tautological vector bundles over $\rgr$: $w_2(F)=w_2(E)=\rho(e(E))$ where $e(E)$ is the Euler class. As $e(E)$ is the generator of $H^2(\rgr,\Z)\cong\Z$, it follows that $F$ does have a spin structure. The claim then easily follows from the discussion in Section  \ref{section PACS structures of GT}.
\end{proof}

We see that, as  mentioned in Introduction, there is no hope to get the 2-Dirac operator which is associated to  a $\tilde\G_0$-structure which lifts $\cG_0\ra\rgr$.

\subsection{Elliptic complex}\label{section elliptic complex}
Recall Section \ref{section non-ef LCS} that $\tcgK\cong\Spin^c(n+2)$ is a maximal compact subgroup of $\tilde\G$. The Stiefel variety $\sv$ is diffeomorphic to  $\tcgK/\tcgH$ where $\tcgH\cong\SO(2)\times\Spin(n)$ is the stabilizer of $\cx_0$ from Lemma \ref{lemma groups in covering}. The kernel of $\tcgK\xrightarrow{\rho_{2,n+2}|_{\tcgK}}\cgK\ra\gK$, where  the second map is the canonical projection, is the center of $\tcgK$. So (up to isomorphism) this is the homomorphism $\rho_{n+2}^c$ from (\ref{first ses}) and this induces a $\tcgK$-action on $\rgr$. Let $\gH^c$ be the stabilizer of  $x_0=[e_1,e_2]^+$ inside $\tcgK$.

\begin{lemma}\label{lemma M as homogeneous space}
The group $\gH^c$ is isomorphic to $\SO(2)\times\Spin^c(n)$ and   $\tcgH\subset\gH^c$  corresponds to the standard inclusion $\SO(2)\times\Spin(n)\hookrightarrow\SO(2)\times\Spin^c(n)$. 
\end{lemma}
\begin{proof}
By construction, it  is clear that $\tcgH$ is a subgroup of $\gH^c$ and by definition, $\gH^c$ is the preimage of the stabilizer $\gH$ of $x_0$ in $\gK$. The restriction of $\tcgK\ra\cgK$ to $\gH^c$ induces a short exact sequence $0\ra\Z_2\ra\gH^c\ra\SO(2)\times\SO(2)\times\SO(n)\ra0$ where the last group is the usual block diagonal subgroup of $\cgK$. It is easy to  see that $\gH^c$ is isomorphic to the quotient of $\gU(1)\times\gU(1)\times\Spin(n)$ by the subgroup generated by $(-1,-1,1)$ and $(-1,1,-1)$. Let $\langle e^{it},e^{is},a\rangle$ be the class of $(e^{it},e^{is},a)$ in the quotient so that the map $\rho_{n+2}^c|_{\gH^c}:\gH^c\ra\gH$ is  $\langle e^{it},e^{is},a\rangle\mapsto(\rho_2(e^{is}),\rho_n(a))$.
It is straightforward to verify that $\gH^c\ra\SO(2)\times\Spin^c(n), \ \langle e^{it},e^{is},a\rangle\mapsto (\rho_2(e^{is}),\langle e^{i(s-t)},a\rangle)$ is a bijective homomorphism of Lie groups where we use the notation set in Section \ref{section spin group}. The inverse map is $(e^{iu},\langle e^{iv},a\rangle)\mapsto\langle e^{i(\frac{u}{2}-v)},e^{\frac{iu}{2}} ,a\rangle$. 
Comparing this with (\ref{stabilizer of cx0}), we have that $\langle e^{it},e^{is},a\rangle\in\tcgH$ if and only if $\rho_2(e^{it})=\rho_2(e^{is})$. Hence, $\tcgH$ corresponds to the subgroup $\SO(2)\times\Spin(n)$ as claimed. \end{proof}

\medskip

We see that $\rgr\cong\tcgK/\gH^c$ and that the map $q:\sv\ra\rgr$ intertwines the actions of $\tcgK$ on both spaces. As $\gH^c$ is a subgroup of $\G_0^c$, we can extend the associated homogeneous principal bundle $\gH^c\ra\tcgK\ra\rgr$  to a $\G_0^c$-principal bundle $\cG_0^c:=\tcgK\times_{\gH^c}\G_0^c\xrightarrow{p_0^c}\rgr$. This is again a homogeneous principal bundle, i.e. there is a  canonical left $\tcgK$-action that covers the action on $\rgr$. 

\bigskip

\begin{thm}\label{thm descended complex}
Suppose that $n\ge3$.

(i) The homogeneous $\G_0^c$-principal bundle $p_0^c:\cG_0^c\ra\rgr$  constructed above is a lift of the principal bundle of the homogeneous PCS-structure of Grassmannian type from Theorem \ref{thm pcs quotient sc to rgr}.
\smallskip

(ii) The associated  determinant circle bundle is isomorphic to $q:\sv\ra\rgr$ and $\tcp_0:\cG_0^c\ra\sv$ 
is isomorphic to the principal bundle of the  homogeneous Lie contact structure  of type $(\tilde\G,\tilde\gP)$ on $\sv$.

\smallskip
(iii) Let $(\mV_{\lambda_i},\varrho^c_i)$ be the $\G_0^c$-modules that are via (\ref{spinc representation on spinors}) associated to the $\tilde\G_0$-modules $(\mV_{\lambda_i},\varrho_i)$ from Theorem \ref{thm 2-Dirac complex}. Put  $V_i:=\cG_0^c\times_{\G^c_0}\mV_{\lambda_i}$. Then the 2-Dirac complex on $\sv$ descends to a complex 
\begin{equation}\label{descended complex}
\Gamma(V_1)\ra\Gamma(V_2)\ra\Gamma(V_3)\ra\Gamma(V_4)
\end{equation}
of $\tcgK$-invariant linear  differential operators. The first operator in the sequence is the 2-Dirac operator  associated to the $\G_0^c$-structure.
\end{thm}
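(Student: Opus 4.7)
The plan for (1) is to directly compute the extension $\cG_0^c\times_{\G_0^c}\G_0$ and identify it with the principal bundle $\cG_0=\gK\times_\gH\G_0$ obtained at the end of the proof of Theorem \ref{thm pcs quotient sc to rgr}. Using the homogeneous description $\cG_0^c=\tcgK\times_{\gH^c}\G_0^c$ and the canonical homomorphism $Id\times\rho_n^c:\G_0^c\to\G_0$, the extension becomes $\tcgK\times_{\gH^c}\G_0$. The composition $\tcgK\xrightarrow{\rho_{n+2}^c}\gK$ intertwines the inclusion $\gH^c\hookrightarrow\G_0^c\to\G_0$ with $\gH\hookrightarrow\G_0$, so it descends to an isomorphism $\tcgK\times_{\gH^c}\G_0\to\gK\times_\gH\G_0=\cG_0$ of $\G_0$-principal bundles; this realizes $\cG_0^c\to\rgr$ as a lift of $\cG_0\to\rgr$.

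For (2), the associated determinant circle bundle is $\cG_0^c\times_{\G_0^c}\gU(1)\cong\tcgK\times_{\gH^c}\gU(1)$ with $\gH^c$ acting on $\gU(1)$ through $\gH^c\hookrightarrow\G_0^c\xrightarrow{\varsigma_n}\gU(1)$. Under the isomorphism $\gH^c\cong\SO(2)\times\Spin^c(n)$ of Lemma \ref{lemma M as homogeneous space} this composition reads off the $\Spin^c(n)$-factor, so its kernel is exactly the subgroup $\SO(2)\times\Spin(n)=\tcgH$. Hence the quotient collapses to $\tcgK/\tcgH$ with its residual $\gH^c/\tcgH=\gU(1)$-action, which is precisely $q:\sv\to\rgr$. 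For the compatible connection, take the standard homogeneous contact 1-form $\alpha$ on $\sv$, normalized so that its Reeb field is the infinitesimal generator $\xi$ of the $Z(\cgK)$-action. Since $q:\sv\to\rgr$ is the reduction by $\xi$ used in Theorem \ref{thm pcs quotient sc to rgr}, the 2-form $d\alpha$ descends to a section $\Omega$ of $\ell$, and the PCS-quotient construction from \cite{CSII} identifies $\Omega$ as a parallel section of $\ell$ for the canonical connection. Thus $i\alpha$ is a compatible principal connection. Theorem \ref{thm compatible G0c structure} then produces a unique Lie contact structure of type $(\tilde\G,\tilde\gP)$ on $\sv$ with underlying $\tilde\G_0$-bundle $\cG_0^c$, underlying Lie contact structure of type $(\G,\gP)$ obtained as the PCS-quotient, and transversal infinitesimal symmetry $\xi$. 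The standard homogeneous Lie contact structure on $\sv$ fulfills all three conditions, so by uniqueness it coincides with the one produced by the theorem.

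For (3), I first verify the representation hypothesis of (\ref{spinc representation on spinors}) for each $\mV_{\lambda_i}=\mE_{\lambda_i}\otimes\Sp$: the central element $-1\in\Spin(n)$ acts trivially on $\mE_{\lambda_i}$ and as $-Id_\Sp$ on the spinors, so it acts as $-Id$ on $\mV_{\lambda_i}$; hence each $\mV_{\lambda_i}$ extends uniquely to a $\G_0^c$-module and defines an associated vector bundle $V_i$ over $\rgr$. Theorem \ref{thm descending}(2) applied to each of the three operators in (\ref{k-Dirac sequence}) then yields linear differential operators $\Gamma(V_i)\to\Gamma(V_{i+1})$. Because $\sv$ carries a locally flat (indeed homogeneous) Lie contact structure, the 2-Dirac complex is an actual complex by Theorem \ref{thm 2-Dirac complex}, and its descent is therefore a complex as well. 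The $\tcgK$-invariance passes through the descent because the identification $\varphi_\mV$ in Theorem \ref{thm descending}(1) is $\tcgK$-equivariant. Finally, the first operator is identified with the 2-Dirac operator attached to the $\G_0^c$-structure by comparing the upstairs formula (\ref{2-Dirac operator locally}) with its quotient: the Clifford action and the principal connection both descend, and the $\lag_{-1}$-component of the soldering form is preserved by the PCS-quotient, so the composition of $\tilde\nabla$ with $\gamma$ matches the 2-Dirac operator defined in the Introduction.

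The step I expect to be the main obstacle is the middle part of (2): showing that the standard contact form $\alpha$ on $\sv$ satisfies $d\alpha=q^*\Omega$ with $\Omega\in\Gamma_\nabla(\ell)$, and that the resulting abstract Lie contact structure produced by Theorem \ref{thm compatible G0c structure} is the standard one rather than an a priori different $\tilde\G_0$-refinement. Both facts require unpacking several $\tcgK$-equivariant identifications and reconciling the two constructions of the $\tilde\G_0$-bundle over $\sv$ (via the standard homogeneous structure on the one hand, and as the lift of the PCS-quotient via $\cG_0^c$ on the other). Once this identification is in place, part (3) is a direct application of Theorem \ref{thm descending}, and part (1) is essentially formal.
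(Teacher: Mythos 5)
Your proposal is correct and follows essentially the same route as the paper: the same homogeneous-bundle identifications $\cG_0^c\cong\tcgK\times_{\gH^c}\G_0^c$, the computation of the determinant circle bundle via the exact sequence $0\ra\tcgH\ra\gH^c\ra\gU(1)\ra0$ giving $\tcgK/\tcgH\cong\sv$, the contact form with Reeb field $\xi$ as compatible connection using the reduction by a transversal contactomorphism, and the direct application of Theorem \ref{thm descending} to the 2-Dirac complex. The point you flag as the main obstacle is handled in the paper exactly as you sketch, via the chain of isomorphisms $\cG_0^c\cong\tcgK\times_{\tcgH}\tilde\G_0$ identifying the bundle with the standard homogeneous one and then invoking uniqueness from Theorem \ref{thm compatible G0c structure}.
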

\begin{proof}
(i) By the proof of Lemma \ref{lemma M as homogeneous space}, it follows that $\rho_{n+2}^c|_{\gH^c}:\gH^c\ra\gH$ equals to $\Id_{\SO(2)}\times\rho_n^c$. It is then  straightforward to verify that the canonical map $\tcgK\times\G_0^c\ra\gK\times_\gH\G_0\cong\cG_0$ descends to a bundle map $\cG_0^c\ra\cG_0$ which is compatible with $\G_0^c\ra\G_0$.

(ii)  The total space of the associated circle bundle is $\cG_0^c\times_{\G_0^c}\gU(1)\cong \tcgK\times_{\gH^c}\G_0^c\times_{\G_0^c}\gU(1)\cong\tcgK\times_{\gH^c}\gU(1)$. The short exact sequence $0\ra\tcgH\ra\gH^c\ra\gU(1)\ra0$ shows that the last space is the quotient of $\tcgK$ by the right action of $\tcgH$, i.e.  this is $\tcgK/\tcgH\cong\sv$. Similarly we find that $\cG^c_0=\tcgK\times_{\gH^c}\G_0^c\cong\tcgK\times_{\SO(2)}\GL^+(2,\R)\cong\tcgK\times_{\tcgH}\tilde\G_0$ and the last space is isomorphic to the total space of the principal bundle of the homogeneous  Lie contact structure of type $(\tilde\G,\tilde\gP)$.

(iii) Let $\alpha$ be the contact form associated to the transversal infinitesimal symmetry $\xi$ from Proposition \ref{thm pcs quotient sc to rgr}. Then on one hand, $q:\sv\ra\rgr$ is a principal $\gU(1)$-bundle with a principal connection $i\alpha$ and on the other hand, it is the map underlying the PCS-quotient from Proposition \ref{thm pcs quotient sc to rgr}. Hence, the underlying $cs$-structure on $\rgr$ is trivialized by a global symplectic form $\mathbf{\Omega}$ such that $d\alpha=q^\ast\mathbf{\Omega}$ and $\frac{-1}{2\pi}[\mathbf{\Omega}]=c_1(\sv)$. By the part (ii), Proposition \ref{thm compatible G0c structure} and Theorem  \ref{thm descending}, the 2-Dirac complex descends to a sequence of operators on $\rgr$.  From the proof of Theorem \ref{thm descending} follows that (\ref{descended complex}) is  a complex. As the operators in the 2-Dirac complex are $\tcgK$-invariant, also the operators in the descended complex have this symmetry.
\end{proof}

\subsection{Symbol sequence}\label{section symbol sequence}
We know that  $T^\ast\rgr\cong E\otimes F^\ast$ where $E$ and $F$ are the tautological vector bundles of rank 2 and $n$, respectively.  By Remark \ref{remark bundles in Dirac complex},  as vector spaces: $\mV_0\cong\mV_3\cong\Sp$ and $\mV_1\cong\mV_2\cong\R^2\otimes\Sp$. A choice of $u\in\cG_0^c$ in the fiber over $x\in\rgr$  induces isomorphisms $(V_0)_x\cong(V_1)_x\cong\Sp, (V_1)_x\cong(V_2)_x\cong\R^2\otimes\Sp$ and a  metric on $F_x$ which is compatible with $\ell$. Hence,  $T^\ast_x\rgr\cong \R^2\otimes \R^n$ and we can view  $X\in T^\ast_x\rgr$  as a pair of vectors  $(X_1,X_2)$  from $\R^n$. 

\begin{lemma}\label{lemma symbol sequence}
With the notation introduced above, the sequence of symbols of the complex (\ref{descended complex}) associated  to $X\in T^\ast_x\rgr$ is
 \begin{equation}\label{symbol sequence}
  \Sp\xrightarrow{\sigma(D_1,X)}\R^2\otimes\Sp\xrightarrow{\sigma(D_2,X)}\R^2\otimes\Sp\xrightarrow{\sigma(D_3,X)}\Sp
 \end{equation}
where 
\begin{align}
\sigma(D_1,X)(\psi)&=(X_1.\psi,X_2.\psi),\\
\sigma(D_2,X)(\psi_1,\psi_2)&=(-X_2.X_1.\psi_1+X_1.X_1.\psi_2,-X_2.X_2.\psi_1+X_1.X_2.\psi_2),\nonumber\\
\sigma(D_3,X)(\phi_1,\phi_2)&=-X_2.\phi_1+X_1.\phi_2\nonumber
\end{align}
where the dot denotes  the standard action of $\R^n$ on $\Sp$ and we view $\R^2\otimes\Sp$ as the vector space of pairs of spinors.
\end{lemma}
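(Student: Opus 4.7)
My plan is to treat the first symbol as a direct reading of the defining formula (\ref{2-Dirac operator locally}) for the 2-Dirac operator, and to obtain the remaining two from the fact that the whole complex descends from the 2-Dirac complex on $\sv$, whose symbols are natural and hence algebraic.

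Fix a frame $u \in \cG_0^c$ over $x \in \rgr$. This choice trivializes the fibers $(V_i)_x$ as the modules $\mV_{\lambda_i}$, identifies $T_x^*\rgr$ with $\R^2 \otimes \R^n$ via the compatible metric on $F_x$, and produces an orthonormal basis of $F_x$ realizing the standard Clifford action on $\Sp$. The symbol of $D_1$ is then read off (\ref{2-Dirac operator locally}) directly: replacing each $\tilde\nabla_{e_i \otimes \varepsilon_\alpha}$ by pairing with the corresponding component of $X = (X_1, X_2)$ yields $\sigma(D_1, X)(\psi) = (X_1.\psi, X_2.\psi)$, as claimed.

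For $D_2$ and $D_3$, Theorem \ref{thm descended complex}.3 realizes them as descendants under $q : \sv \to \rgr$ of the operators $D_2^\sharp, D_3^\sharp$ in the 2-Dirac complex of Theorem \ref{thm 2-Dirac complex}. The descending procedure identifies sections via (\ref{invariant differential equation}), whose constraint lives only along the vertical Reeb field $\xi$; since the principal symbol detects only transverse directions, the symbol of $D_i$ at $X \in T_x^*\rgr$ coincides with the symbol of $D_i^\sharp$ at the horizontal lift of $X$ into the contact distribution on $\sv$. These parent symbols are natural, i.e.\ $\tilde\G_0$-equivariant maps $S^{d_i}(\lag_{-1}) \otimes \mV_{\lambda_i} \to \mV_{\lambda_{i+1}}$ with $d_i \in \{2,1\}$, and the branching analysis underlying \cite{S} shows that the relevant equivariant spaces are one-dimensional. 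Hence it suffices to exhibit nonzero equivariant candidates of the correct form.

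The stated formulas are manifestly $\G_0^c$-equivariant, being built from iterated Clifford multiplication on $\Sp$ (through the identification $\lag_{-1} \cong \mE^\ast \otimes \mF$) together with the $\SL(2)$-invariant skew contraction on the $\R^2$-factor. Using $X_i.X_i = -|X_i|^2$ in the Clifford algebra, one checks directly that $\sigma(D_2,X) \circ \sigma(D_1,X) = 0$ and $\sigma(D_3,X) \circ \sigma(D_2,X) = 0$, matching the complex property. The scalar normalizations and signs are pinned down by comparison with the flat-space 2-Dirac complex on $\R^{2n}$ of \cite{CSSS} and \cite{TSI}, where the operators are constant-coefficient and coincide with their symbols. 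I expect the main obstacle to be this last normalization step, since \cite{S} constructs $D_2^\sharp$ via the curved Casimir rather than an explicit formula; the comparison reduces, however, to a pointwise calculation on the flat homogeneous model.
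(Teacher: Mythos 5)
Your reduction to the parent operators on $\sv$ (identifying $T^\ast_x\rgr$ with $H^\ast_{\cx}$ via $Tq$ and reading the symbol of the descended operator as the symbol of $D_i^\sharp$ at the lifted covector) is exactly the paper's first step, and your treatment of $\sigma(D_1,X)$ via (\ref{2-Dirac operator locally}) is consistent with it. But where the paper then simply quotes the explicit symbol computation of the 2-Dirac complex from Section 6.3 of \cite{S} (adding only the remark that the symbol is independent of the choice of Weyl structure, so the very flat Weyl structure used there is harmless), you replace that citation by an abstract argument, and this is where there is a genuine gap. Your key claim that the spaces of $\tilde\G_0$-equivariant candidates for the symbols are one-dimensional is asserted, not proved, and it is doubtful as stated: for $n$ even the complex spinor module splits as $\Sp=\Sp^+\oplus\Sp^-$, so already $\mathrm{Hom}_{\Spin(n)}(\R^n\otimes\Sp,\Sp)$ is two-dimensional (the components $\R^n\otimes\Sp^\pm\to\Sp^\mp$ can be scaled independently), and for the second-order symbol the space of equivariant maps out of $S^2\lag_{-1}\otimes\mV_{\lambda_1}$ is even harder to control. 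Without a multiplicity-one statement, exhibiting one nonzero equivariant map of the right shape and checking that the composites vanish does not identify the actual symbols, not even up to scale.

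The fallback you propose for pinning down signs and normalizations -- comparison with the flat 2-Dirac complex of \cite{CSSS}, \cite{TSI} -- does not close the gap within this paper: the identification of the descended operators on $\rgr$ with those constant-coefficient flat-space operators is not established here (the operators of \cite{S} are produced by splitting operators and the curved Casimir, with no explicit formula), so this step is essentially equivalent to the computation you are trying to avoid. The clean way to finish is the paper's: after the identification $(V^\sharp_i)_{\cx}\cong(V_i)_x$ and $T^\ast_x\rgr\cong H^\ast_{\cx}$ induced by the frame $u$, invoke the explicit symbol formulas for the 2-Dirac complex proved in Section 6.3 of \cite{S}, together with the observation that principal symbols do not depend on the Weyl structure used there. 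Alternatively, if you want to keep your route, you must actually prove the relevant uniqueness (taking the half-spinor decomposition into account) rather than appeal to unspecified branching facts.
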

\begin{proof}
The point $u\in\cG_0^c$ also  determines isomorphisms $(V^\sharp_i)_{\cx}\cong(V_i)_x,\ i=0,1,2,3$ where $\cx=\tcp_0(u)$ and $q(\cx)=x=p_0^c(u)$. Since the map $Tq$ descends to a linear isomorphism $H_{\cx}\ra T_{x}\rgr$ and dually to $T^\ast_x\rgr\cong H^\ast_{\cx}$, we  can view $X$ as a vector in $H^\ast_{\cx}$. The claim then follows from \cite[Section 6.3]{S}. 
\end{proof}

Notice that the 2-Dirac complex is in \cite{S}  computed over an affine  subset of the homogeneous space with respect to the very flat Weyl structure and this is not the Weyl structure determined by the transversal infinitesimal symmetry $\xi$. However, the symbol of each operator in the complex is independent of the choice of  Weyl structure and so we can use in the proof of Lemma \ref{lemma symbol sequence} the result.

\begin{thm}\label{thm ellipticity and index}
The complex (\ref{descended complex}) is elliptic and its index  is equal to zero.
\end{thm}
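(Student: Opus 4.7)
The symbol sequence is given explicitly by Lemma \ref{lemma symbol sequence}, so I would verify exactness at the two middle terms for every nonzero $X=(X_1,X_2)\in T^\ast_x\rgr$. After an orthogonal change of basis of $F_x\cong\R^n$ one may assume $X_1,X_2\in[e_1,e_2]$; the element $J:=e_1\cdot e_2\in\cliff(\R^n)$ then satisfies $J^2=-1$, and the Clifford relations $e_ie_j+e_je_i=-2\delta_{ij}$ let one identify both $\mathrm{Im}(\sigma(D_1,X))$ and $\ker(\sigma(D_2,X))$ with the graph $\{(\psi,J\cdot\psi):\psi\in\Sp\}\subset\R^2\otimes\Sp$ in the generic case $X_1,X_2$ orthonormal; the analogous computation handles exactness at the third slot. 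Separate inspection of the degenerate cases (one of $X_1,X_2$ zero, or the two proportional) uses only that Clifford multiplication by a nonzero vector is invertible on $\Sp$. Alternatively, one may simply invoke \cite{CSSS} and \cite{TSI}, since the symbol sequence (\ref{symbol sequence}) is identical to that of the flat 2-Dirac complex on $\R^{2n}$.

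\textbf{Index via Peter--Weyl.} The complex is $\tcgK$-invariant by Theorem \ref{thm descended complex}(3), and from $\cG_0^c=\tcgK\times_{\gH^c}\G_0^c$ one gets $V_i\cong\tcgK\times_{\gH^c}\mV_{\lambda_{j(i)}}$ as a homogeneous bundle over $\rgr\cong\tcgK/\gH^c$. Peter--Weyl therefore gives
\begin{equation*}
\Gamma(V_i)\cong\bigoplus_\mu\mu\otimes\mathrm{Hom}_{\gH^c}(\mu|_{\gH^c},\mV_{\lambda_{j(i)}}),
\end{equation*}
summed over irreducible $\tcgK$-representations $\mu$. The invariant differential operators preserve this isotypic decomposition, and each $\mu$-isotypic piece is a finite-dimensional subcomplex whose Euler characteristic coincides with that of its cohomology. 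The analytic index is thus $\sum_\mu\dim(\mu)\cdot\dim\mathrm{Hom}_{\gH^c}\bigl(\mu|_{\gH^c},\,\sum_j(-1)^j[\mV_{\lambda_j}]\bigr)$, so it suffices to establish the representation-ring identity
\begin{equation*}
\sum_{j=0}^{3}(-1)^j[\mV_{\lambda_j}]=0\quad\text{in }R(\gH^c).
\end{equation*}

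\textbf{Character computation.} By Lemma \ref{lemma M as homogeneous space}, $\gH^c\cong\SO(2)\times\Spin^c(n)$ sits inside $\G_0^c=\GL^+(2,\R)\times\Spin^c(n)$ as the obvious product subgroup, and the $\Spin^c(n)$-factor acts in the same way on the common $\Sp$ in all four modules $\mV_{\lambda_j}$. Factoring $[\Sp^c]$ out, the identity reduces to $\sum_j(-1)^j[\mE_{\lambda_j}]=0$ in $R(\SO(2))$. Writing $\mE_{\lambda_j}\cong\mathrm{Sym}^{a_j-b_j}(\R^2)\otimes(\det)^{b_j}$ with $[a_j,b_j]$ the highest weights listed in Theorem \ref{thm 2-Dirac complex}, and using that $\det\equiv1$ on $\SO(2)\subset\GL^+(2,\R)$, the $\SO(2)$-restrictions are the trivial representation for $j=0,3$ and the standard $2$-dimensional representation for $j=1,2$. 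Their alternating characters sum to $1-2\cos\theta+2\cos\theta-1=0$, which finishes the proof.

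\textbf{Main obstacle.} The chief subtlety lies in the $\Spin^c$-bookkeeping needed to justify the tensor-product restriction $\mV_{\lambda_j}|_{\gH^c}\cong\mE_{\lambda_j}|_{\SO(2)}\otimes\Sp^c$, especially for even $n$ when the $\GL^+(2,\R)$-weights of $\mE_{\lambda_j}$ are half-integral: the factor $(\det)^{b_j}$ is legitimate on $\GL^+(2,\R)$ (since $\det>0$) and becomes trivial after restriction to $\SO(2)$, but one must track the $\Spin^c$-twist in (\ref{spinc representation on spinors}) carefully to see that the combined $\mV_{\lambda_j}$ is an honest $\G_0^c$-module whose $\gH^c$-restriction factorises as claimed. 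Once this structural identification is settled, both the Clifford-algebraic check of ellipticity and the character cancellation are short.
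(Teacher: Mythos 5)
Your proposal is correct in substance, and while the ellipticity part runs parallel to the paper, your index argument takes a genuinely different route. The paper treats ellipticity exactly as your ``alternative'': it observes that exactness of (\ref{symbol sequence}) is, via the identifications in Lemma \ref{lemma symbol sequence}, equivalent to exactness of the symbol sequence of the 2-Dirac complex on $\sv$ for every non-zero covector in $H^\ast$, which is proved in Section 6.3 of \cite{S}; your hands-on Clifford computation is fine as far as it goes, but note that your case list (orthonormal pair, one vector zero, proportional vectors) omits linearly independent non-orthonormal pairs $(X_1,X_2)$ --- you need either to redo the computation for a general independent pair or to invoke $\GL^+(2,\R)$-equivariance of the symbols to reduce to the orthonormal case. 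For the index, the paper reduces the structure group of $\cG_0^c\ra\rgr$ to $\gH^c$, notes that $\mV_{\lambda_0}\cong\mV_{\lambda_3}$ and $\mV_{\lambda_1}\cong\mV_{\lambda_2}$ as $\gH^c$-modules, and then appeals to the Atiyah--Singer index formula for $G$-structures \cite{AS} together with this duality; you instead exploit the $\tcgK$-invariance of the complex (Theorem \ref{thm descended complex}) and compute the analytic index directly by Peter--Weyl and Frobenius reciprocity on $\rgr\cong\tcgK/\gH^c$, showing each isotypic subcomplex has vanishing Euler characteristic because $\sum_j(-1)^j[\mV_{\lambda_j}]=0$ in $R(\gH^c)$. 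Both proofs hinge on the same representation-theoretic cancellation (trivial $\SO(2)$-types for $j=0,3$, standard ones for $j=1,2$, with the $\Spin^c(n)$-factor, including its central $\gU(1)$-twist from (\ref{spinc representation on spinors}), acting identically in all four slots); the paper states this isomorphism without more detail, so your level of care there is comparable. What your route buys is independence from the index theorem, using only compact-group harmonic analysis; what it costs is the need to justify that cohomology of the elliptic $\tcgK$-invariant complex commutes with the isotypic decomposition (e.g.\ via Hodge theory with invariant metrics, the harmonic space of each degree being a finite-dimensional $\tcgK$-module whose $\mu$-part is computed by the finite-dimensional $\mu$-isotypic subcomplex), a standard point which you assert in one sentence but should spell out, whereas the paper's appeal to \cite{AS} packages the analysis into a citation.
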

\begin{proof}
It can can be verified directly that the symbol sequence is exact. However, it is more convenient to argue as in the proof of  Lemma \ref{lemma symbol sequence} since then it is clear that the exactness of (\ref{symbol sequence}) is equivalent to the exactness of the symbol sequence of the 2-Dirac complex  with respect to each non-zero vector from $H^\ast$. The latter claim is proved in  \cite[Section 6.3]{S}.

It remains to show that the index is equal to zero. We reduce the structure group of $\cG_0^c\ra\rgr$ to $\gH^c$.  Then $\mV_0\cong\mV_3$ and $\mV_1\cong\mV_2$ are isomorphic as $\gH^c$-modules. Then we can compute (see \cite{AS}) the index using the Atiyah-Singer index formula for $G$-structures. Here we need that $\rgr$ is an orientable compact manifold and that $T\rgr$ is associated to $\gH^c$-structure $\tcgK\ra\rgr$. Using the duality in the complex, the claim follows.
\end{proof}

\def\bibname{Bibliography}

\bigskip

\textsc{Mathematical Institute of  Charles University, Sokolovsk\'a  49/83, Prague, The Czech Republic.}\\
\smallskip

\textit{E-mail address}: \texttt{salac$@$karlin.mff.cuni.cz}
\end{document}